\renewcommand{\phi}{\varphi}
\newcommand{\bfa}{\mathbf{a}}
\newcommand{\bft}{\mathbf{t}}
\newcommand{\bfT}{\mathbf{T}}
\newcommand{\bfX}{\mathbf{X}}
\newcommand{\Lgal}{\tilde L}
\newcommand{\gal}{{\rm Gal}}
\newcommand{\bbQ}{\mathbb Q}
\newcommand{\bbZ}{\mathbb Z}
\newtheorem{theorem}{Theorem}[section]
\newtheorem{lemma}[theorem]{Lemma}
\theoremstyle{definition}
\newtheorem{definition}[theorem]{Definition}
\theoremstyle{remark}
\newtheorem{remark}[theorem]{Remark}
\newtheorem{problem}[theorem]{Problem}
\newtheorem{exampleplain}[theorem]{Example}
\begin{document}

\fontsize{12}{18}
\selectfont

\CompileMatrices 

\title[Characterization of Hilbertian fields]{On the characterization of Hilbertian fields}%
\author[L. Bary-Soroker]{Lior Bary-Soroker}%
\address{The Raymond and Beverly Sackler school of mathematical sciences, Tel-Aviv university}%
\email{barylior@post.tau.ac.il}
\begin{abstract}
The main goal of this work is to answer a question of D\`ebes and
Haran by relaxing the condition for Hilbertianity. Namely we prove
that for a field $K$ to be Hilbertian it suffices that $K$ has the
irreducible specialization property merely for \emph{absolutely}
irreducible polynomials.
\end{abstract}

\keywords{Hilbert's irreducibility theorem; Hilbertian field;
RG-Hilbertian field}

\maketitle

\section{Introduction}
Let $K$ be a number field. Hilbert's irreducibility theorem
\cite{Hilbert1892} gives for irreducible polynomials
$f_i(T_1,\ldots,T_r,X_1,\ldots, X_s)\in K[\bfT,\bfX]$,
$i=1,\ldots,n$ and a nonzero polynomial $p(\bfT)\in K[\bfT]$ an
$r$-tuple $\bfa \in K^r$ for which $p(\bfa)\neq 0$ and all
$f_i(\bfa,\bfX)$ are irreducible in $K[\bfX]$. Actually, Hilbert's
proof shows that it suffices to have a weaker irreducible
specialization property, namely to have such an irreducible
specialization only for one irreducible $f(T,X)\in K[T,X]$,
separable in $X$ and $p(T)\in K[T]$.

A field satisfying the latter property is called
\textbf{Hilbertian}. So if $K$ is Hilbertian, then the above
stronger irreducibility specialization property holds, provided that
$s=1$ and $f_i(T_1,\ldots,T_r,X)$ is separable in $X$, for each
$i=1,\ldots,n$. Moreover to have irreducible specializations for any
$s\geq 1$ and with no separability assumption, it is sufficient and
necessary that $K$ is Hilbertian and imperfect (Uchida's Theorem
\cite{Uchida80}, see also \cite[Proposition
12.4.3]{FriedJarden2005}).

Hilbert's irreduciblity theorem has numerous applications in number
theory (see e.g.\ \cite{Serre1997}). In particular, Hilbert's
original motivation for this theorem, the inverse Galois problem
over a field $K$, which asks what finite groups occur as Galois
group over $K$. Those applications make the question what fields are
Hilbertian interesting.


The main goal of this paper is to answer a question of D\`ebes and
Haran \cite{DebesHaran1999} by proving that for a field $K$ to be
Hilbertian it suffices that $K$ has the irreducible specialization
property just for absolutely irreducible polynomials:

\begin{theorem}\label{thm:main}
Let $K$ be a field. Assume that for any absolutely irreducible
$f(T,X)\in K[T,X]$, separable in $X$ and any nonzero $p(T)\in K(T)$
there exists $a\in K$ such that $p(a)\neq 0$ and $f(a,X)$ is
irreducible. Then $K$ is Hilbertian.
\end{theorem}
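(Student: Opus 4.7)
The plan is to reduce the general irreducible case to the absolutely irreducible one by producing, for each irreducible $f(T,X) \in K[T,X]$ separable in $X$, an auxiliary absolutely irreducible $g(T,X) \in K[T,X]$ separable in $X$ whose irreducible specializations force those of $f$. Once such a $g$ is built, applying the hypothesis to $g$ (with $p$ multiplied by an auxiliary nonzero polynomial) completes the proof.

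Set $E = K(T)[X]/(f)$ and let $L = \bar K \cap E$ be the field of constants of $E$; then $L/K$ is finite separable, and $[L:K]$ equals the number of absolutely irreducible factors of $f$ over $\bar K[T,X]$. If $L = K$, then $f$ is itself absolutely irreducible and the hypothesis applies directly, so I assume $[L:K] > 1$. A Galois-theoretic analysis shows that, outside a thin set cut out by a nonzero $q(T) \in K[T]$, the specialization $f(a,X)$ is irreducible over $K$ if and only if (i) an absolutely irreducible factor $f_1(a,X) \in L[X]$ of $f(T,X)$ remains irreducible over $L$, and (ii) $K(\alpha_a) \supseteq L$ for any root $\alpha_a$ of $f(a,X)$. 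The auxiliary $g$ must therefore be designed so that $g(a,X)$ irreducible over $K$ together with $q(a) \neq 0$ forces both (i) and (ii).

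For the construction, pass to the Galois closure $\tilde E/K(T)$ of $E$, with $G = \gal(\tilde E/K(T))$; let $\tilde L$ be the Galois closure of $L$ over $K$ (the field of constants of $\tilde E$), and set $N = \gal(\tilde E/\tilde L(T))$, so $G/N \cong \gal(\tilde L/K)$. I would seek a regular subextension $F/K(T)$ of $\tilde E$ that is \emph{complementary} to the constants in the sense that $F \cdot \tilde L(T) = \tilde E$; a primitive element of $F$ over $K(T)$ then has absolutely irreducible minimal polynomial $g \in K[T,X]$, and irreducibility of $g(a,X)$ provides exactly the extra information---beyond the automatic surjection $\gal(K^{\mathrm{sep}}/K) \to G/N$ coming from the field of constants---needed to force transitivity of the specialization image on the cosets $G/\gal(\tilde E/E)$, which is precisely (i) and (ii).

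The principal obstacle is that such a regular complement $F$ need not exist: the short exact sequence $1 \to N \to G \to G/N \to 1$ can fail to split. The remedy is to first enlarge $\tilde E$ by compositing with an auxiliary regular Galois extension $M/K(T)$ (itself produced by applying the hypothesis to a suitable absolutely irreducible polynomial tied to the structure of $\tilde L/K$), so that the analogous sequence does split in $\tilde E \cdot M/K(T)$, and then extract $g$ from a complement there. Verifying that the resulting $g$ is genuinely absolutely irreducible, separable in $X$, and that its irreducible specializations continue to force $f(a,X)$ to be irreducible, is the technical crux of the argument.
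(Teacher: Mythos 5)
Your opening reduction is in the right spirit (the paper's Lemma~\ref{lem:Reduction} performs essentially the same split into a regular part and a constant part), but the core of your plan---producing a \emph{single} absolutely irreducible $g$ such that irreducibility of $g(a,X)$, together with $q(a)\neq 0$, forces irreducibility of $f(a,X)$---has a genuine gap that your splitting remedy does not repair. Write $G=\gal(\tilde E/K(T))$, $N=\gal(\tilde E/\tilde L(T))$, and let $H=\gal(\tilde E/F)$ be the complement you extract, so $HN=G$ and $H\cap N=1$. At a good specialization the decomposition group $D\leq G$ satisfies $DN=G$ automatically (the constants survive) and $DH=G$ exactly when $g(a,X)$ is irreducible; but these two conditions do not force $DA=G$ for the subgroup $A$ corresponding to $E$. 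Already for $G=N\times H\cong C_2\times C_2$ the diagonal subgroup $D$ satisfies both yet is proper. Concretely: for $f$ the minimal polynomial of $\sqrt2+\sqrt T$ over $\bbQ(T)$, your complement yields $g=X^2-T$ (or $X^2-2T$), and at $a=2b^2$ the polynomial $g(a,X)$ is irreducible while its root generates exactly $\bbQ(\sqrt2)=\tilde L$, so $f(a,X)$ splits into quadratics. Compositing with an auxiliary regular Galois $M$ does not help: the same diagonal phenomenon recurs in the enlarged group, and over an arbitrary field you have no control over which $D$ actually occurs (this control is exactly what D\`ebes--Haran exploit in the PAC case). Indeed, the existence of a single absolutely irreducible $g$ with $H_K(g;q)\subseteq H_K(f;p)$ is precisely the question the paper records as an \emph{open problem} after the proof, so your route is aiming at something strictly harder than the theorem.

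What is missing is the paper's actual mechanism for guaranteeing that the residue field of the regular part is linearly disjoint from $L$: take $r\geq s(L/K)$ algebraically independent copies of $F/K(t)$, form their compositum $E_1\cdots E_r$ over $K(\bft)$, cut the transcendence degree back to $1$ by Bertini--Noether and Matsusaka--Zariski (Lemma~\ref{lem:fromrto1}), and apply the hypothesis \emph{once} to a generator of the resulting regular extension. Lemma~\ref{lem:lineardisjointness} then makes the $r$ residue fields $N_1,\ldots,N_r$ linearly disjoint over $K$, and the pigeonhole argument of Lemma~\ref{lem:toomanylinearlydisjoint} produces at least one $N_i$ linearly disjoint from $L$, which is exactly your conditions (i) and (ii) for that specialization. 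The price is a weaker conclusion---for each point of the auxiliary Hilbert set, only \emph{some} coordinate $a_i$ lands in $H_K(f;p)$---but that is all Hilbertianity requires.
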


This theorem is known `a priori' for special fields, namely PAC
fields \cite[Theorem~4.2]{DebesHaran1999}. A field $K$ is
\textbf{Pseudo Algebraically Closed (PAC)} if $V(K)\neq \emptyset$
for any absolutely irreducible variety $V$ defined over $K$. For PAC
fields there is a connection between group theoretic properties of
the absolute Galois group $\gal(K)$ and irreducible specializations
of polynomials. We describe this connection below.

We prove Theorem~\ref{thm:main} for an arbitrary field $K$. In fact,
the argument we are using seems simpler than the argument used in
\cite{DebesHaran1999} for the case where $K$ is PAC. 

In the rest of the introduction we describe the research that led
D\`ebes-Haran to their question and then briefly explain the main
ingredient of the proof of Theorem~\ref{thm:main}.

The Hilbertianity property can be reformulated in terms of fields
and places as follows (see Lemma~\ref{lem:placepolynomials}): Let
$t$ be a transcendental element over a field $K$. Then $K$ is
Hilbertian if and only if the following property
\eqref{property:place} holds for any finite separable extension
$F/K(t)$ and nonzero $p(T)\in K[T]$.
\begin{enumerate}
\renewcommand{\theenumi}{*}\renewcommand{\labelenumi}{(\theenumi)}
\item
\label{property:place} There exists a $K$-place $\psi$ of $F$ such
that $a=\psi(t)\in K$, $p(a)\neq 0$, and the degree of $\psi$ equals
to the degree $[F:K(t)]$.
\end{enumerate}

Here a \textbf{$K$-place} of a function field $F/K$ is a place
$\phi$ of $F$ such that $\phi(x) = x$ for all $x\in K$. The degree
of $\phi$ is defined to be $\deg \phi = [N:K]$, where $N$ is the
residue field of $F$ under $\phi$.

In \cite{FriedVolklein1992} Fried and V\"olklein introduce the class
of Regular-Galois-Hilbertian fields -- An \textbf{RG-Hilbertian}
field is a field which satisfies \eqref{property:place} for any
finite Galois $F/K(t)$ for which $F$ is regular over $K$ and nonzero
$p(T)\in K[T]$. This class of fields is important in the context of
the inverse Galois problem. For example, considering a PAC field $K$
of characteristic $0$, they showed that $K$ is RG-Hilbertian if and
only if any finite group occurs as a Galois group over $K$ and that
$K$ is Hilbertian if and only if $K$ is $\omega$-free (i.e.\ any
finite embedding problem has a proper solution). These results are
generalized for a field with an arbitrary characteristic by Pop
\cite{Pop1996}.

Using these group theoretic characterizations of Hilbertianity over
a PAC field, Fried and V\"olklein give an example of a PAC field
which is non-Hilbertian but is RG-Hilbertian, by constructing a
projective profinite group having any finite group as a quotient,
but some finite embedding problem is not solvable.

In \cite{DebesHaran1999} D\`ebes and Haran construct some concrete
new examples of non-Hilbertian RG-Hilbertian fields, which, in
contrast to Fried-V\"olklein examples, are not PAC, and are even
quite small over $\bbQ$ in a certain sense. Also, they exhibit other
variants of Hilbertianity which divide into two kinds. The first is
consisted on the R-Hilbertian and G-Hilbertian fields, which satisfy
\eqref{property:place} for any regular, resp.\ Galois, $F/K(t)$.

The second kind comes from the following characterization of
Hilbertian fields. A necessary and sufficient condition for
Hilbertianity is that for any irreducible
$f_1(T,X),\ldots,f_r(T,X)\in K[T,X]$ that are separable and of
degree $>1$ in $X$ and for any nonzero $p(T)\in K[T]$ there exists
$a\in K$ for which $p(a)\neq 0$ and no $f_i(a,X)$ has a root in $K$
\cite[Lemma 13.1.2 and Proposition 13.2.2]{FriedJarden2005}. Then
the class of \emph{Mordellian} fields is defined -- a field $K$ is
Mordellian if the above specialization property holds for one
polynomial (i.e.\ $r=1$). Similarly to the above are defined
R-Mordellian, RG-Mordellian, and G-Mordellian fields.

D\`ebes and Haran then sum up (using the following nice diagram) the
connections between all the variants. Also none of the converses to
(2), (3), (4), (5), and (6) in the diagram holds \cite[Theorem
5.1]{DebesHaran1999}. In case $K$ is PAC, using a sophisticated
group theoretic construction, D\`ebes and Haran show that the
converse of (1) holds, but for an arbitrary $K$ they simply say
``\textit{We do not know whether the converse of (1) holds in
general...}'' Theorem~\ref{thm:main} then completes the job by
showing that the converse of (1) always holds.

\def\entry#1{\hbox to 1.3truecm{\hss #1 \hss}}
\[
\xymatrix{%
\rm {\begin{array}{c}
    \entry{Hilbertian}\\ \entry{$\Leftrightarrow$
    G-Hilbertian}
\end{array}}
    \ar@{=>}[d]_{(1)}\\
{\entry{R-Hilbertian}}
    \ar@{=>}[d]_{(2)}\\
{\entry{both Mordellian and RG-Hilbertian}}
    \ar@<-10ex>@{=>}[d]_{(3)}
    \ar@<10ex>@{=>}[d]_{(4)}\\
{\begin{array}{c}
    \entry{Mordellian}\\
    \entry{$\Leftrightarrow$ R-Mordellian}
\end{array}}
\qquad\qquad\qquad \entry{RG-Hilbertian}
    \ar@<-10ex>@{=>}[d]_{(5)}
    \ar@<10ex>@{=>}[d]_{(6)}\\
{\begin{array}{c} \entry{G-Mordellian}\\ \entry{$\Leftrightarrow$
RG-Mordellian}\end{array}}
}%
\]

We conclude the introduction by a brief survey of the proof of
Theorem~\ref{thm:main}. It is well known that it suffices to verify
\eqref{property:place} in case $F/K(t)$ splits, i.e., we can assume
$F = F_0L$, where $F_0/K(t)$ is regular and $L/K$ Galois (see
Lemma~\ref{lem:Reduction}). A simple observation is that an
irreducible specialization of $F_0$ gives an irreducible
specialization of $F$ if and only if the residue field of $F_0$ is
linearly disjoint from $L$ over $K$.

The main argument is to consider many copies of $F_0/K(t)$ and then
to simultaneously irreducibly specialize all of them. Then if we
have enough copies, at least one of the specializations is `good,'
i.e., its residue field would be linearly disjoint from $L$.

\noindent \textbf{Notation.} Throughout the paper we use $E,K,L,F$
to denote fields, $T,X$ for variables, and $t$ for a transcendental
element over $K$. Bold face letters denote tuples, e.g., $\bfT =
(T_1,\ldots,T_r)$ (resp., $\bft=(t_1,\ldots,t_r)$) denotes a tuple
of variables (resp., transcendental elements). As above, we say that
an extension $F/K(\bft)$ is regular, if $F$ is regular over $K$.

\section{Auxiliary Lemmas}
\begin{lemma}\label{lem:placepolynomials}
A field $K$ is Hilbertian if and only if \eqref{property:place}
holds for every finite separable extension $F/K(t)$ and nonzero
$p(T)\in K[T]$.
\end{lemma}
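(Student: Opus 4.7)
The plan is to establish both implications by using the standard dictionary between primitive elements of finite separable extensions $F/K(t)$ and irreducible polynomials $f(T,X)\in K[T,X]$ that are separable in $X$.

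For the direction Hilbertian $\Rightarrow$ \eqref{property:place}, I would begin with a finite separable $F/K(t)$ and choose a primitive element $x\in F$ that is integral over $K[t]$; clearing denominators in the minimal polynomial of $x$ over $K(t)$ produces an irreducible $f(T,X)\in K[T,X]$, separable in $X$, with $f(t,x)=0$ and leading coefficient $c_n(T)$ in $X$. Hilbertianity applied to $f$ and $p(T)c_n(T)$ yields $a\in K$ with $p(a)c_n(a)\neq 0$ and $f(a,X)$ irreducible of degree $n=[F:K(t)]$. Choosing a root $b$ of $f(a,X)$, the assignment $t\mapsto a$, $x\mapsto b$ defines a surjective $K$-algebra homomorphism $K[t,x]\isom K[T,X]/(f)\to K(b)$ with maximal kernel; dominating the corresponding local ring by a valuation ring of $F$ then gives a $K$-place $\psi$ whose residue field $N$ contains $K(b)$. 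Since the fundamental inequality for extensions of the $(t-a)$-adic valuation forces $[N:K]\leq n$ and $[K(b):K]=n$, I obtain $N=K(b)$ and $\deg\psi=n$, as required.

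For the converse, given $f(T,X)\in K[T,X]$ irreducible and separable in $X$ together with a nonzero $p(T)\in K[T]$, I would form $F=\mathrm{Frac}(K[T,X]/(f))$; this is a finite separable extension of $K(t)$ of degree $n=\deg_X f$, where $t,x$ denote the images of $T,X$. Applying \eqref{property:place} with the auxiliary polynomial $p'(T)=p(T)\,c_n(T)\,\mathrm{disc}_X f(T,X)$ would produce $a\in K$ and a $K$-place $\psi$ of $F$ with $\psi(t)=a$, $p'(a)\neq 0$, and $\deg\psi=n$. Since $c_n(a)\neq 0$, the element $x$ is integral over $K[t]_{(t-a)}$, hence lies in the valuation ring of $\psi$, and $b:=\psi(x)$ satisfies $f(a,b)=0$; moreover $f(a,X)$ has degree $n$ and is separable because $\mathrm{disc}_X f(a)\neq 0$. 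The hard part is then to derive the irreducibility of $f(a,X)$ over $K$: the fundamental inequality together with $[N:K]=n$ forces $(t-a)$ to have a unique unramified prime above it in the integral closure of $K[t]_{(t-a)}$ in $F$ with residue degree $n$, so in the completion $K((t-a))[X]$ the polynomial $f(t,X)/c_n(t)$ stays irreducible; any nontrivial coprime factorization of $f(a,X)$ in $K[X]$ would then lift via Hensel's lemma to a factorization over $K((t-a))$, contradicting that irreducibility.

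The main obstacle I anticipate in both directions is controlling the residue field of $\psi$. In the easier direction this reduces cleanly to the fundamental inequality, whereas in the converse it forces inclusion of the discriminant factor in $p'(T)$: without the separability of $f(a,X)$ that this guarantees, the Hensel lifting step fails, and the residue field could then be strictly larger than $K(b)$.
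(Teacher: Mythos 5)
Your proposal is correct and follows essentially the same route as the paper: both directions rest on the dictionary between finite separable extensions $F/K(t)$ and irreducible $f(T,X)$ separable in $X$, with the auxiliary polynomial augmented by the leading coefficient and the discriminant. The only difference is one of packaging: the paper simply cites \cite[Lemma~13.1.1]{FriedJarden2005} for the forward direction and \cite[Remark~6.1.7]{FriedJarden2005} for the fact that the residue field is generated by a root of $f(a,X)$ (which immediately gives irreducibility from $[N:K]=n$), whereas you prove the forward direction from scratch and replace the residue-field citation by an equivalent, slightly longer argument via the fundamental inequality and Hensel's lemma in $K((t-a))$.
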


\begin{proof}
\cite[Lemma~13.1.1]{FriedJarden2005} implies that a Hilbertian
field $K$ satisfies \eqref{property:place}.

Conversely, let  $f(T,X)\in K[T,X]$ be an irreducible polynomial
that is separable in $X$ and let $0\neq p(T)\in K[T]$. Let $q(T)$ be
the product of $p(T)$ with the leading coefficient of $f(T,X)$ and
its discriminant (regarding $f$ as a  polynomial in $X$ over
$K(T)$). Let $\psi$ be the corresponding $K$-place that
\eqref{property:place} gives for $F/K(t)$ and $q(T)$, where $F =
K(t)[X]/(f(t,X))$.

Then the residue field $N$ of $F$ under $\psi$ is generated by a
root of $f(a,X)$ \cite[Remark~6.1.7]{FriedJarden2005}. Thus
$[F:K(t)]=[N:K]$ implies that $f(a,X)$ is irreducible.
\end{proof}

The following observation gives a sufficient condition for a
polynomial to have an irreducible specialization in terms of a place
of a regular extension having certain properties.

\begin{lemma}\label{lem:Reduction}
Let $f(T,X)\in K[T,X]$ be an irreducible polynomial that is
separable in $X$. Then there exists a nonzero $p(T)\in K[T]$, a
Galois extension $L/K$, and a separable regular extension $F/K(t)$
such that the following holds. Let $\psi$ be a $K$-place of $F$ with
residue field $N$. Assume that $a=\psi(t)\in K$, $p(a)\neq 0$,
$[N:K] = [F:K(t)]$, and $N$ is linearly disjoint from $L$ over $K$.
Then $f(a,X)$ is irreducible.
\end{lemma}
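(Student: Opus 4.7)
The plan is to read off $L$ and $F$ from the Galois closure of the extension of $K(t)$ defined by $f$. Set $E = K(t)[X]/(f(t,X))$, a finite separable extension of $K(t)$ of degree $n = \deg_X f$, let $\hat E$ denote its Galois closure over $K(t)$ in a fixed algebraic closure, and set $L = \hat E \cap \tilde K$, the algebraic closure of $K$ inside $\hat E$. Then $L/K$ is finite Galois, $\hat E/L(t)$ is regular Galois, and one has the short exact sequence
\[
1 \to \gal(\hat E/L(t)) \to \gal(\hat E/K(t)) \to \gal(L/K) \to 1.
\]
This $L$ will play the role of the Galois extension in the conclusion.

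The extension $F$ will be a subfield of $\hat E$ (or of a modest enlargement) that is separable regular over $K(t)$ and satisfies $F \cdot L(t) = \hat E$, so that $[F:K(t)] = [\hat E : L(t)]$. Equivalently, $F = \hat E^{\Gamma}$ for some subgroup $\Gamma \le \gal(\hat E/K(t))$ complementary to the normal subgroup $\gal(\hat E/L(t))$. When no such complement exists inside $\gal(\hat E/K(t))$ itself, I would first enlarge $\hat E$ to a Galois extension of $K(t)$ with the same constant field $L$ in which the analogous sequence splits (for example by compositing with a regular Galois realization of $\gal(L/K)$ over $K(t)$). For the polynomial $p(T)$, I would take the product of the leading coefficient of $f$ in $X$, the discriminant of $f$ in $X$, and any further factor needed to guarantee that the hypothetical place $\psi$ extends cleanly to $\hat E$ and that the class of $X$ in $E$ lies in the extended valuation ring.

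Given $\psi$ as in the hypothesis, I would extend it to a $K$-place $\psi'$ of $\hat E = F \cdot L(t)$, with residue field $N'$. Then $N' \supseteq N$ (via $\psi'|_F = \psi$), and $N'$ also contains a $K$-isomorphic copy of $L$ (via $\psi'|_L$, since $L/K$ is algebraic and $\psi'$ is a $K$-place). The hypothesis that $N$ and $L$ are linearly disjoint over $K$ yields
\[
[N':K] \ \geq\ [NL:K] \ =\ [N:K]\cdot[L:K] \ =\ [F:K(t)]\cdot[L:K] \ =\ [\hat E : K(t)],
\]
while the opposite inequality holds automatically. So $\psi'$ has maximal residue degree on $\hat E$, and multiplicativity of residue degrees along $K(t) \subseteq E \subseteq \hat E$ forces the restriction $\psi'|_E$ to have residue field $N_E$ of degree $[E:K(t)] = n$ over $K$. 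Since $N_E$ is generated over $K$ by $\psi'$ applied to the class of $X$ in $E$, which is a root of $f(a,X)$, this root has degree exactly $n$ over $K$, so $f(a,X)$ coincides up to a scalar with its minimal polynomial and is therefore irreducible.

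The main obstacle I anticipate is the construction step, namely exhibiting the decomposition $\hat E = F \cdot L(t)$ with $F$ regular over $K$, possibly after enlarging $\hat E$ to ensure that the constant field extension sequence splits. Once this decomposition is in hand, the remainder of the argument is a direct bookkeeping of places and residue degrees.
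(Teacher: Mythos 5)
The second half of your argument---extending $\psi$ to a place of $FL$, using the linear disjointness of $N$ and $L$ to force the residue degree of $FL$ to be maximal, and then reading off irreducibility of $f(a,X)$ from the residue field of $E$---is essentially the paper's proof, and it is correct. The gap is in the construction step, exactly where you anticipate it. The paper does not build $F$ and $L$ inside the Galois closure $\hat E$ of $K(t,x)/K(t)$; it invokes \cite[Lemma~13.1.3]{FriedJarden2005}, which produces a Galois extension $FL/K(t)$ \emph{containing} $\hat E$ with $F/K$ regular and $L/K$ Galois, where $L$ is in general \emph{strictly larger} than the constant field $L_0=\hat E\cap\tilde K$. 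That enlargement of the constant field is not a technicality: it is the only way around the non-splitting you worry about.

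Your proposed repair---enlarge $\hat E$ to a Galois extension of $K(t)$ \emph{with the same constant field} in which the sequence splits---provably cannot work. If $\hat E'\supseteq\hat E$ is Galois over $K(t)$ with constant field still $L_0$, then the restriction map $\gal(\hat E'/K(t))\to\gal(\hat E/K(t))$ carries any complement of $\gal(\hat E'/L_0(t))$ to a complement of $\gal(\hat E/L_0(t))$: it is injective on such a subgroup because $\gal(\hat E'/\hat E)\subseteq\gal(\hat E'/L_0(t))$, and the image still has order $[L_0:K]$ and meets $\gal(\hat E/L_0(t))$ trivially. So if the original sequence does not split, no enlargement keeping the constant field fixed creates a splitting. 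Your concrete suggestion has two further problems: a regular Galois realization of an arbitrary finite group $\gal(L/K)$ over $K(t)$ is the regular inverse Galois problem, which is open in general; and even granted such a realization $R$ linearly disjoint from $\hat E$, a complement inside $\gal(R\hat E/K(t))\cong\gal(R/K(t))\times\gal(\hat E/K(t))$ projects (by the same argument) to a complement inside $\gal(\hat E/K(t))$, so compositing with a direct factor gains nothing. The correct construction must let the constant field grow, e.g.\ via the Kaloujnine--Krasner embedding of $\gal(\hat E/K(t))$ into a wreath product, whose defining extension does split; this is exactly the content of the cited Lemma~13.1.3, so the honest options are to quote that lemma or to reprove it, not to patch the splitting inside $\hat E$.
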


\begin{proof}
Let $x$ be a root of $f(t,X)$ in a separable closure of $K(t)$. By
\cite[Lemma 13.1.3]{FriedJarden2005} there exist fields $F$ and $L$
such that $F/K$ is regular and $t\in F$, $L/K$ is Galois, $x\in FL$
and $FL/K(t)$ is Galois. Let $E= FL$ and let $p(t)$ be the product
of the leading coefficient of $f(t,X)$ and its discriminant as a
polynomial in $X$.

It suffices to find a $K$-place $\phi$ of $E$ such that
$a=\phi(t)\in K$, $p(a)\neq 0$, $\deg\phi = [E:K(t)]$ (w.r.t.\
$E/K$). Indeed, assume $\phi$ is such a place and let $M$ denote the
residue field of $E$. Since $p(a)\neq 0$ we have that $b=\phi(x)$ is
finite. Hence the residue field of $K(t,x)$ under $\phi$ is $K(b)$.
Since $f(a,b)=0$ we have
\begin{eqnarray*}
\deg f(a,X)&\geq& [K(b):K] = \frac{[M:K]}{[M:K(b)]} =
\frac{[E:K(t)]}{[M:K(b)]}\geq\frac{[E:K(t)]}{[E:K(t,x)]}\\
&=& [K(t,x):K(t)] = \deg_X f(t,X)=\deg f(a,X).
\end{eqnarray*}
Therefore $\deg f(a,X) = [K(b):K]$ which implies that $f(a,X)$ is
irreducible, as needed.

\[
\xymatrix{%
F\ar@{-}[d] \ar@{-}[r]
    &E\ar@{-}[d]\ar@{-}[dl]|{K(t,x)}\\
K(t)\ar@{-}[r]
    &L(t)
}%
\qquad \mathop{\dashrightarrow}^\phi \qquad
\xymatrix{%
N\ar@{-}[d] \ar@{-}[r]
    &M\ar@{-}[d]\ar@{-}[dl]|{K(b)}\\
K\ar@{-}[r]
    &L
}%
\]

Let $\psi$ be the $K$-place of $F$ given by the assumption. Extend
$\psi$ to an $L$-place $\phi$ of $E$. Let $M$, $N$ be the respective
residue fields of $E$, $F$ under $\phi$. Then as $E=FL$ and $\phi$
is an $L$-place we have that $M = NL$. Since $N$ and $L$ are
linearly disjoint over $K$, $F$ and $L(t)$ are linearly disjoint
over $K(t)$, and $\deg\psi=[F:K(t)]$ it follows that
\[
[M:K]=[N:K][L:K]=[F:K(t)][L:K]=[E:K(t)].
\]
Finally, since $\psi(t) = \phi(t)$, we have $p(\phi(t))\neq 0$, and
thus the assertion follows.
\end{proof}

A similar argument gives the following result.
\begin{lemma}\label{lem:lineardisjointness}
Let $F_1,\ldots, F_r$ be linearly disjoint separable extensions of a
field $E$ and let $F = F_1\cdots F_r$. Let $\phi$ be a place of
$F/E$ with a residue field extension $N/K$ and of degree $\deg\phi =
[F:E]$. Let $N_i$ be the residue field of $E_i$ under $\phi$, for
each $i=1,\ldots,r$. Then $[N_i:K]=[F_i:E]$ and $N_1,\ldots, N_r$
are linearly disjoint over $K$.
\end{lemma}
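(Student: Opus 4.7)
The plan is to squeeze the inequalities: we have upper bounds on $[N_i:K]$ from the individual extensions $F_i/E$, upper bounds on $[N_1\cdots N_r:K]$ from linear disjointness considerations, and a lower bound on $[N:K]$ from the assumption $\deg\phi=[F:E]$. Forcing everything to agree will deliver both conclusions simultaneously.

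First I would invoke the standard fact that for any finite separable extension $F'/E$ and any place of $F'$ extending a given place of $E$ with residue fields $N'$, $K$ respectively, one has the inequality $[N':K]\leq[F':E]$ (this is \cite[Ch.~2]{FriedJarden2005}-type material on places of function fields). Applied to each $F_i/E$ and to the restriction $\phi|_{F_i}$, this gives $[N_i:K]\leq [F_i:E]$ for every $i$.

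Next I would observe that each $N_i$ embeds into $N$ in a canonical way: the residue map on the valuation ring of $\phi$ restricts to the residue map on the valuation ring of $\phi|_{F_i}$, so $N_i\subseteq N$. Consequently the compositum $N_1\cdots N_r$ sits inside $N$. Combining this with the hypothesis and with the linear disjointness of the $F_i$ over $E$, I get the chain
\[
[N:K]\;\geq\;[N_1\cdots N_r:K]\;\leq\;\prod_{i=1}^{r}[N_i:K]\;\leq\;\prod_{i=1}^{r}[F_i:E]\;=\;[F:E]\;=\;[N:K],
\]
where the middle inequality is the general bound on the degree of a compositum, the next one is the individual place inequality, and the penultimate equality uses that $F_1,\ldots,F_r$ are linearly disjoint separable extensions of $E$ with $F=F_1\cdots F_r$.

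Finally, since the two endpoints of this chain coincide, every inequality is an equality. Equality in $\prod[N_i:K]=\prod[F_i:E]$, combined with the termwise inequality $[N_i:K]\leq[F_i:E]$, forces $[N_i:K]=[F_i:E]$ for each $i$. Equality $[N_1\cdots N_r:K]=\prod_i[N_i:K]$ is exactly the statement that $N_1,\ldots,N_r$ are linearly disjoint over $K$. The only delicate point is formulating cleanly the place-theoretic inequality $[N':K]\le[F':E]$ and the observation $N_i\subseteq N$; once these are in hand the rest is a one-line squeeze.
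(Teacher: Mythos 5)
Your chain of inequalities does not close, because it is not monotone: the first step reads $[N:K]\geq[N_1\cdots N_r:K]$, while all the subsequent steps are $\leq$'s that end back at $[N:K]$. From $A\geq B\leq C\leq D=A$ one cannot conclude that anything is an equality (for instance $4\geq 1\leq 2\leq 4=4$ has coinciding endpoints and no equalities in between). The underlying problem is that every estimate you invoke, namely $[N_i:K]\leq[F_i:E]$, $[N_1\cdots N_r:K]\leq\prod_i[N_i:K]$, and $[N_1\cdots N_r:K]\leq[N:K]$, bounds the residue data from \emph{above}; you never produce a lower bound on any $[N_i:K]$, so as far as your argument is concerned nothing rules out $N_i=K$ for every $i$.

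The missing ingredient is the \emph{relative} place inequality for an intermediate field: if $E\subseteq F_0\subseteq F$ and $N_0$ is the residue field of $F_0$ under $\phi$, then besides $[N_0:K]\leq[F_0:E]$ one also has $[N:N_0]\leq[F:F_0]$. Combined with the hypothesis $[N:K]=[F:E]$ and multiplicativity of degrees in the towers $F/F_0/E$ and $N/N_0/K$, these two upper bounds squeeze against each other and force $[N_0:K]=[F_0:E]$; taking $F_0=F_i$ yields the first assertion. This is exactly how the paper argues. For the linear disjointness you need the inequality $[N_1\cdots N_r:K]\geq\prod_i[N_i:K]$, and since $\prod_i[N_i:K]=\prod_i[F_i:E]=[F:E]=[N:K]$, this amounts to the identity $N=N_1\cdots N_r$, i.e.\ the reverse of the containment $N_1\cdots N_r\subseteq N$ that you record; the paper uses this identity (it is the case $F_0=F$ of its displayed computation), whereas your write-up supplies only the inclusion. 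As it stands, then, neither conclusion of the lemma is actually established, and the ``delicate points'' you flag are not the ones where the real work lies.
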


\begin{proof}
Let $F_0$ be a subextension of $F/E$ with residue field $N_0$. As
$[F:E]=[N:K]$ we have
\[
[F_0:E] = [F:E]/[F:F_0] = [N:K]/[F:F_0] \leq [N:K]/[N:N_0] =
[N_0:K]\leq [F_0:E],
\]
and hence $[N_0:K]=[F_0:E]$. In particular, for $F_0=F_i$ we get
$[N_i:K]=[F_i:E]$. Next take $F_0=F$. Then $N_0 = N = N_1\cdots
N_r$, and we have
\[
[N_1 \cdots N_r : K]=[N:K] = [F:E] = [F_1:E]\cdots [F_r:E] =
[N_1:K]\cdots[N_r:K],
\]
which implies that $N_1,\ldots,N_r$ are linearly disjoint over $K$.
\end{proof}

The following well known consequence of Bertini-Noether lemma and
Matsusaka-Zariski theorem reduces the transcendence degree of a
regular extension to $1$ (cf. proof of \cite[Proposition
13.2.1]{FriedJarden2005}). For the sake of completeness the proof is
included below.

\begin{lemma}\label{lem:fromrto1}
Let $r\geq 2$, let $(t,\bft) = (t,t_1, \ldots, t_r)$ be an
$(r+1)$-tuple of algebraically independent transcendental elements
over an infinite field $K$, and let $E/K(\bft)$ be a finite regular
separable extension. Then there exist $\alpha_i,\beta_i\in K$,
$\beta_i\neq 0$, $i=2,\ldots,r$ for which the specialization
$\bft\mapsto(t,\alpha_2 + \beta_2t ,\ldots, \alpha_r + \beta_r t)$
extends to a $K$-place $\phi$ of $E$ with a regular residue field
extension of degree $\deg\phi = [E:K(\bft)]$.
\end{lemma}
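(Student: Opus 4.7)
The plan is to reduce, via a Bertini-type argument, to the absolute irreducibility of a bivariate polynomial after a generic linear specialization of the $t_i$'s. First I would pick a primitive element $x$ of $E/K(\bft)$ that is integral over $K[\bft]$, and let $g(\bft,X)\in K[\bft,X]$ be its monic minimal polynomial; because $E/K$ is regular, $g$ is absolutely irreducible, and it is separable in $X$ by assumption.

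The core step is to introduce fresh indeterminates $\alpha_2,\ldots,\alpha_r,\beta_2,\ldots,\beta_r,t$ over $K$ and show that
\[
h(\alpha,\beta,t,X) = g\bigl(t,\alpha_2+\beta_2 t,\ldots,\alpha_r+\beta_r t,X\bigr)\in K[\alpha,\beta,t,X]
\]
is absolutely irreducible. Geometrically, the substitution is the morphism $\Phi\colon\mathbb{A}^{2r}_{\bar K}\to\mathbb{A}^{r+1}_{\bar K}$, $(\alpha,\beta,t,X)\mapsto(t,\alpha_2+\beta_2 t,\ldots,\alpha_r+\beta_r t,X)$, which is surjective and whose fiber over any point is parameterized by $(\beta_2,\ldots,\beta_r)\in\bar K^{r-1}$ (with the $\alpha_i$ then determined); in particular every fiber is irreducible. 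Hence $\Phi^{-1}(\{g=0\})=\{h=0\}$ is irreducible, and since $h$ inherits separability in $X$ from $g$, $h$ itself---not merely its radical---is absolutely irreducible. Now Bertini-Noether (\cite[Proposition 9.4.3]{FriedJarden2005}) supplies a nonzero $p_0(\alpha,\beta)\in K[\alpha,\beta]$ such that $h(\alpha^*,\beta^*,t,X)\in K[t,X]$ is absolutely irreducible whenever $p_0(\alpha^*,\beta^*)\neq 0$; since $K$ is infinite, I can pick $(\alpha^*,\beta^*)\in K^{2(r-1)}$ with $p_0(\alpha^*,\beta^*)\neq 0$ and all $\beta_i^*\neq 0$.

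It remains to realize the specialization as a place. Consider the ideal $\mathfrak{p}\subseteq K[\bft,x]$ generated by $\{t_i-\alpha_i^*-\beta_i^*t_1\}_{i=2}^r$; the quotient identifies with $K[t_1,X]/\bigl(h(\alpha^*,\beta^*,t_1,X)\bigr)$, which is an absolutely integral domain, so $\mathfrak{p}$ is prime and its fraction field $N$ is a regular extension of $K$ satisfying $[N:K(t_1)]=\deg_X g=[E:K(\bft)]$. Extending the local ring $K[\bft,x]_{\mathfrak{p}}$ to a valuation ring of $E$ yields a $K$-place $\phi$ whose residue field $M$ contains $N$; since the restriction of $\phi$ to $K(\bft)$ has residue field $K(t_1)$, we have $[M:K(t_1)]\leq[E:K(\bft)]=[N:K(t_1)]$, forcing $M=N$. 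Writing $t$ for the image of $t_1$, this $\phi$ does the job. The main obstacle is the absolute-irreducibility step for $h$; once that is settled, Bertini-Noether and the standard place-extension machinery close the argument.
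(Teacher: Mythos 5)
Your overall architecture (primitive element with absolutely irreducible monic minimal polynomial, a linear substitution with indeterminate coefficients, Bertini--Noether to specialize those coefficients into $K$, then the standard domination argument to identify the residue field) matches the paper's, and your endgame with the prime ideal and the valuation ring is correct. The gap is in the ``core step.'' Bertini--Noether, applied with parameter ring $K[\alpha,\beta]$, requires $h$ to be absolutely irreducible \emph{over the parameter field} $K(\alpha,\beta)$, i.e., irreducible in $\widetilde{K(\alpha,\beta)}[t,X]$ for an algebraic closure $\widetilde{K(\alpha,\beta)}$. What your fibration argument addresses is irreducibility of $h$ in $\widetilde{K}[\alpha,\beta,t,X]$, i.e., irreducibility of the total space $\{h=0\}\subseteq\mathbb{A}^{2r}_{\widetilde K}$. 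These are genuinely different, and the implication you need goes the wrong way: $X^2-\alpha$ is irreducible in $\widetilde{K}[\alpha,X]$ yet splits over $\widetilde{K(\alpha)}$, and correspondingly $X^2-a$ is absolutely reducible for \emph{every} specialization of $\alpha$, so no Bertini--Noether polynomial $p_0$ can exist from the weaker hypothesis. Irreducibility of the total space only gives irreducibility of the generic fibre over $K(\alpha,\beta)$, not its geometric irreducibility. The statement you actually need --- that $g(t,\alpha_2+\beta_2t,\ldots,\alpha_r+\beta_rt,X)$ remains irreducible over an algebraic closure of $K(\alpha,\beta)$ --- is precisely an iterated form of the Matsusaka--Zariski theorem \cite[Proposition 10.5.4]{FriedJarden2005}, which is the one substantive external input in the paper's proof and which your topological argument does not replace.

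A secondary point: even for the weaker statement, the principle ``surjective with irreducible fibres implies the preimage of an irreducible set is irreducible'' is false in general (project $V(xy)\subseteq\mathbb{A}^2$ onto the $x$-axis: the map is surjective and every fibre is irreducible, but the source is reducible). Your particular $\Phi$ is fine because the triangular coordinate change $s_i=\alpha_i+\beta_i t$ turns it into a coordinate projection, hence a trivial $\mathbb{A}^{r-1}$-bundle; indeed in those coordinates $h$ is literally $g(t,s_2,\ldots,s_r,X)$ read in a polynomial ring with the dummy variables $\beta$ adjoined, so irreducibility in $\widetilde K[\alpha,\beta,t,X]$ is immediate. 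But notice that this coordinate change mixes $t$ with the parameters, which is exactly why it cannot be leveraged to get irreducibility over $\widetilde{K(\alpha,\beta)}$ in the variables $t,X$; that is where Matsusaka--Zariski does real work, and where your proof needs to be repaired (either by citing it, as the paper does in its variable-by-variable induction, or by supplying an independent proof of that statement).
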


\begin{proof}
Let $x\in E$ be integral over $K[\bft]$ such that $E=K(\bft,x)$. Let
$f(\bfT,X)\in K[\bfT,X]$ be the absolutely irreducible, separable
and monic in $X$ polynomial for which $f(\bft,x)=0$ and let
$p(\bfT)$ be its discriminant.

Take two variables $U,V$. Matsusaka-Zariski Theorem implies that
$f(T_1,\ldots, T_{r-1}, U + V T_1,X)$ is irreducible in the ring
$\Lgal [T_1,\ldots,T_{r-1} , X]$, where $\Lgal$ is an algebraic
closure of $K(U,V)$ \cite[Proposition 10.5.4]{FriedJarden2005}. By
Bertini-Neother Lemma there exists a nonzero $c(U,V)\in K[U,V]$ such
that for any $\alpha_r,\beta_r\in K$ satisfying
$c(\alpha_r,\beta_r)\neq 0$ the polynomial $f(T_1,\ldots,T_{r-1},
\alpha_r + \beta_r T_1 , X)$ remains absolutely irreducible over $K$
\cite[Proposition 10.4.2]{FriedJarden2005}. Since $K$ is infinite,
there exist $\alpha_r,\beta_r\in K$, $\beta_r\neq 0$ such that
$c(\alpha_r,\beta_r)\neq 0$ and $p(T_1,\ldots, T_{r-1}, \alpha_r +
\beta_r T_{1})\neq 0$. Induction on $r$ yields $\alpha_i,\beta_i\in
K$, $\beta_i\neq 0$, $i=2,\ldots, r$ such that $g(T,X) = f(T,
\alpha_2 + \beta_2 T,\ldots, \alpha_r + \beta_r T,X)$ is an
absolutely irreducible polynomial and $q(T) = p(T, \alpha_2 +
\beta_2 T, \ldots, \alpha_r + \beta_r T)\neq 0$.

To conclude the proof, extend the specialization
$\bft\mapsto(t,\alpha_2 + \beta_2t ,\ldots, \alpha_r + \beta_r t)$
to a $K$-place $\phi$ of $E$ with a residue field extension
$E'/K(t)$ \cite[Lemma 2.2.7]{FriedJarden2005}. Then $E' = K(t,x')$
where $x'=\phi(x)$ is a root of $g(t,X)$; hence $E'$ is regular over
$K$ and
\[
\deg\phi = [E':K(t)] = \deg_X g(t,X)=\deg_X f(\bft,X)=[E:K(\bft)],
\]
as needed.
\end{proof}

For a field extension $L/K$ we set $s(L/K)$ to be the number of
subextensions $K\subseteq L_0 \subseteq L$. Note that if $L/K$ is
finite and separable, then $s(L/K)$ is also finite.

\begin{lemma}\label{lem:toomanylinearlydisjoint}
Let $L/K$ be a finite separable extension with Galois closure $E/K$.
Let $r\geq s(E/K)$ and let  $N_1,\cdots, N_r$ be linearly disjoint extensions of $K$. Then there
exists $i\in \{1,\ldots,r\}$ for which $N_i$ is linearly disjoint
from $L$ over $K$.
\end{lemma}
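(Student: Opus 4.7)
The plan is to reduce the assertion to finding an index $i$ with $N_i \cap E = K$, and then to extract such an index by a pigeonhole argument on the subextensions of $E/K$. For the reduction: if $N_i\cap E = K$ then, since $E/K$ is Galois, $N_i$ is linearly disjoint from $E$ over $K$; and since $L\subseteq E$, composing the inclusions $N_i\otimes_K L \hookrightarrow N_i\otimes_K E \hookrightarrow N_iE$ shows that $N_i$ is linearly disjoint from $L$ over $K$ as well.

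For each $i$ I would set $M_i = N_i\cap E$, a subextension of $E/K$. Linear disjointness of $N_1,\ldots,N_r$ over $K$ entails pairwise linear disjointness, and pairwise linear disjointness of any two extensions of $K$ forces their intersection to equal $K$ (a standard tensor-product or degree verification: if $F\subseteq N_i\cap N_j$ is finite of degree $d$ over $K$, then $[N_iN_j:K]\le [N_i:K][N_j:K]/d$, forcing $d=1$). Hence $M_i\cap M_j = K$ for all $i\neq j$; in particular, two distinct indices $i,j$ with $M_i = M_j$ must give the common value $K$.

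Now the pigeonhole step is immediate. By definition $s(E/K)$ counts the subextensions of $E/K$ including $K$ itself, so there are $s(E/K)-1$ nontrivial ones. The previous paragraph shows that the $M_i$ which are nontrivial must be pairwise distinct, so at most $s(E/K)-1$ indices can satisfy $M_i\neq K$. Since $r\geq s(E/K)$, at least one index $i$ must have $M_i = K$, and by the reduction the proof is complete. I do not foresee any real obstacle here: the argument amounts to a counting observation together with the two standard verifications about linear disjointness invoked above.
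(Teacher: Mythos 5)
Your proposal is correct and follows essentially the same route as the paper: reduce to showing some $N_i\cap E=K$ using that $E/K$ is Galois, note that pairwise linear disjointness forces $N_i\cap N_j=K$, and apply the pigeonhole principle to the subextensions $N_i\cap E$ of $E/K$. The extra justifications you supply (linear disjointness from $E$ implies linear disjointness from $L\subseteq E$, and the degree argument for $N_i\cap N_j=K$) are standard facts the paper leaves implicit.
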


\begin{proof}
It suffices to show that there exists $i\in \{1,\ldots,r\}$ for
which $N_i$ is linearly disjoint from $E$ over $K$. Let $E_i =
N_i\cap E$. As $E/K$ is Galois, $N_i$ and $E$ are linearly disjoint
if and only if $E_i=K$. Assume thus that $E_i\neq K$ for all $i$.
Since $r>s(E/K)-1$, the pigeonhole principle gives $i\neq j$ for
which $E_i=E_j$. But $E_i\cap E_j\subseteq N_i\cap N_j = K$, which
implies that $E_i=K$, a  contradiction.
\end{proof}

\section{Proof of Theorem~\ref{thm:main}}
We assume that for any absolutely irreducible polynomial $g(T,X)\in
K[T,X]$ and nonzero $p(T)$ there exists $a\in K$ such that $p(a)\neq
0$ and $g(a,X)$ is irreducible. This assumption implies
\eqref{property:place} for regular $F/K$ and nonzero $p(T)$.

Let $F/K(t)$ be a separable extension of degree $n$ with $F/K$
regular and let $L/K$ be a Galois extension. By
Lemma~\ref{lem:Reduction}, it suffices to show that there exists a
$K$-place $\psi$ of $F$ satisfying
\begin{eqnarray}
&&a=\psi(t)\in K\label{eq:ainK}\\
&&p(a)\neq 0\label{eq:pneq0}\\
&&[N:K]=n\label{eqLdegequaln}\\
&&\hbox{$N$ and $L$ are linearly disjoint over
$K$,}\label{eq:LinDis}
\end{eqnarray}
where $N$ is the residue field of $F$ under $\psi$.

Let $r \geq s(L/K)$ (recall that $s(L/K)$ is the number of
subextensions of $L/K$). Take $r$ algebraically disjoint copies of
$F/K(t)$, that is to say, consider an $r$-tuple
$\bft=(t_1,\ldots,t_r)$ of algebraically independent transcendental
elements and for each $i=1,\ldots, r$ consider an extension
$F_i/K(t_i)$ and a $K$-isomorphism $\nu_i\colon F\to F_i$ under
which $t$ maps to $t_i$. Let $E_i = F_i K(\bft)$ and $E = E_1\cdots
E_r$. Then $E_1,\ldots, E_r$ are linearly disjoint over $K(\bft)$
and we have
\begin{equation*}
[E_i:K(\bft)] = [F_i:K(t_i)]=n,
\end{equation*}
for $i=1,\ldots,r$, and thus $n^r = [E_1:K(\bft)]\cdots
[E_r:K(\bft)]=[E:K(\bft)]$.

Lemma~\ref{lem:fromrto1} gives $\alpha_i,\beta_i\in K$, $\beta_i\neq
0$ for which the specialization $\bft \mapsto (t_0,\alpha_2 +
\beta_2t_0 ,\ldots, \alpha_r + \beta_r t_0)$ (with transcendental
element $t_0$) extends to a $K$-place $\phi$ of $E/K(\bft)$ with a
regular residue field $E'/K(t_0)$ of degree $\deg\phi =
[E:K(\bft)]=n^r$. Let $E'_i$ be the residue field of $E_i$ under
$\phi$, $i=1,\ldots, r$.

Note that the set $A_0=\{a_0\in K \mid \exists 1\leq i\leq r \mbox{
such that } p(\alpha_i + \beta_i a_0)=0\}$ is finite, since
$\beta_i\neq 0$, and hence $q(T)=\prod_{a_0\in A_0}(T-a_0)$ is a
polynomial. Therefore we can apply \eqref{property:place} for the
regular extension $E'/K(t_0)$ and $q(T)\in K[T]$ to get a place
$\psi'$ of $E'$ of degree $n^r$ such that $a_0=\psi'(t_0)\in K$ and
$q(a_0)\neq 0$.

Let $N_i$ be the residue field of $E'_i$ under $\psi'$,
$i=1,\ldots,r$. In this setting, Lemma~\ref{lem:lineardisjointness}
asserts that $[N_i:K] = [E'_i:K(t_0)] = [E_i:K(\bft)]=n$ and
$N_1,\ldots,N_r$ are linearly disjoint over $K$. Since $r\geq
s(L/K)$, Lemma~\ref{lem:toomanylinearlydisjoint} gives $i\in \{
1,\ldots, r\}$ for which $N_i$ and $L$ are linearly disjoint over
$K$.

\[
\xymatrix@R=2pt{%
F\ar@{-}[ddd]\ar[r]_{\nu_i}\ar@/^10pt/[rrrr]^{\psi}
    & F_i\ar@{-}[ddd]\ar@{^(->}[r]
        &E_i\ar@{-}[ddd]\ar[r]_{\phi_i}
            &E'_i\ar@{-}[ddd]\ar[r]_{\psi'}
                &N_i\ar@{-}[ddd]
\\
\\
\\
K(t)\ar[r]
    &K(\bft)\ar@{^(->}[r]
        &K(\bft)\ar@{->}[r]
            &K(t_0)\ar[r]
                &K
\\
t\ar@{|->}[r]^{\nu_i}
    &t_i\ar@{|->}[r]
        &t_i\ar@{|->}[r]
            &\alpha_i+\beta_i t_0\ar@{|->}[r]
                &\alpha_i+\beta_i a_0
}%
\]

To conclude the proof set $\psi = \psi'\phi_i\nu_i$, where $\phi_i =
\phi|_{E_i}$ is the restriction of $\phi$ to $E_i$. We have
\[
a = \psi(t) = \psi'(\phi(t_i)) = \psi'(\alpha_i + \beta_i t_0) =
\alpha_i + \beta_i a_0\in K,
\]
and hence \eqref{eq:ainK} holds. The residue field of $F$ under
$\psi$ is $N_i$; this proves \eqref{eqLdegequaln} and
\eqref{eq:LinDis}. Recall that $q(a_0)\neq 0$ implies that $p(a)\neq
0$, hence we have \eqref{eq:pneq0} and the proof of
Theorem~\ref{thm:main} is completed.

\begin{remark}[Hilbert sets]
Let $f(\bfT,X)\in K[\bfT,X]$ be an irreducible polynomial that is
separable in $X$ and $p(\bfT)\in K[T]$. Then the corresponding
\textbf{Hilbert set} is defined to be
\[
H_K(f;p) =\{\bfa\in K^r \mid f(\bfa,X) \mbox{ is irreducible and }
q(\bfa)\neq 0 \}\subseteq K^r.
\]

The proof of Theorem~\ref{thm:main} gives the following assertion.
Let $f(T,X)=f_0(T)X^d+ \cdots \in K[T,X]$ be an irreducible
polynomial that is separable in $X$ and let $p(T)\in K[T]$ be
nonzero. Then there exists an absolutely irreducible polynomial
$g(T_1,\ldots, T_r,X)\in K[\bfT,X]$ (for any sufficiently large $r$)
and $q(\bfT)\in K[\bfT]$ such that the following holds. For any
$\bfa\in H_K(g;q)$ there exists $i\in \{1,\ldots,r\}$ such that
$a_i\in H_{K}(f;p)$.

(The above assertion follows from the proof of
Theorem~\ref{thm:main} by taking $g(\bfT,X)$ such that a root of
$g(\bft,X)$ generates $E/K(\bft)$ and $q(\bfT)=\prod_{i=1}^r
p(T_i)f_0(T_i)$.)
\end{remark}

The above remark leads to a slightly finer question than
D\`ebes-Haran question, which might be of interest:
\begin{problem}[cf.\ {\cite[Problem
13.1.5]{FriedJarden2005}}] Let $f(T,X)\in K[T,X]$ be an irreducible
polynomial, separable in $X$ and let $p(T)\in K[T]$ be nonzero. Does
there exist an absolutely irreducible $g(T,X)$ and nonzero $q(T)$
such that $H_K(g;q)\subseteq H_K(f;p)$?
\end{problem}

\section{Small Extensions of Hilbertian Fields}
\begin{definition}
An extension $M/K$ is called \textbf{small} if $M/K$ is Galois and
for every positive integer $n$ there exists finitely many
subextensions $L$ of $M/K$ of degree $\leq n$.
\end{definition}

\begin{exampleplain}
If $\gal(M/K)$ is finitely generated, then $\gal(M/K)$ is small
\cite[Lemma~16.10.2]{FriedJarden2005}.
\end{exampleplain}

\begin{exampleplain}
If $\gal(M/K) \cong \prod_p \bbZ_p^p$, where $p$ runs over all
primes and $\bbZ_p$ is the $p$-adic group, then $M/K$ is small and
$\gal(M/K)$ is not finitely generated
\cite[Example~16.10.4]{FriedJarden2005}.
\end{exampleplain}

We reprove \cite[Proposition~16.11.1]{FriedJarden2005} using
Theorem~\ref{thm:main}.
\begin{theorem}
Let $K$ be a Hilbertian field and $M/K$ a small extension. Then $M$
is Hilbertian.
\end{theorem}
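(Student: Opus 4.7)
The plan is to apply Theorem~\ref{thm:main} to reduce immediately to the case of an absolutely irreducible polynomial $f(T,X) \in M[T,X]$ of degree $n$ in $X$ and separable in $X$, together with a nonzero $p(T) \in M[T]$, and aim to find $a \in K \subseteq M$ with $p(a) \neq 0$ and $f(a,X)$ irreducible in $M[X]$. The central idea is that smallness of $M/K$ will convert the a priori infinite collection of potential obstructions---intermediate fields between a suitable $L$ and $M$ that could appear inside the residue field of a specialization---into a finite set, which can be absorbed into a single finite auxiliary extension $L^*$ of $L$.

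\textbf{Setup.} First I would choose a finite Galois extension $L/K$ inside $M$ containing all coefficients of $f$ and $p$; such an $L$ exists as the Galois closure in $M$ of any finite subfield carrying these coefficients, and it remains inside $M$ because $M/K$ is Galois. Next, by smallness there are only finitely many intermediate fields $L \subseteq L' \subseteq M$ with $[L':L] \leq n$, since each satisfies $[L':K] \leq n[L:K]$. I would let $L^*$ be their compositum inside $M$; it is a finite extension of $L$, and $L^*/K$ is Galois because the set of such $L'$ is $\gal(M/K)$-stable (any $\sigma \in \gal(M/K)$ fixes $L$ setwise and preserves degrees). Then I would take $y$ with $f(t,y)=0$ in an algebraic closure of $L(t)$; absolute irreducibility makes $L(t,y)/L$ regular of degree $n$, so $L(t,y)$ and $L^*(t)$ are linearly disjoint over $L(t)$, and $F := L^*(t,y)$ is a finite separable extension of $K(t)$ of degree $n[L^*:K]$ (separability from $f$ separable in $X$ and $M/K$ separable).

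\textbf{Hilbertianity and conclusion.} I would take $q(T) \in K[T]$ to be the product of $N_{L/K}$ applied to $p(T)$, to the leading coefficient of $f$ in $X$, and to the discriminant of $f$ in $X$; each factor is nonzero in $K[T]$, so $q$ is nonzero. By Lemma~\ref{lem:placepolynomials}, Hilbertianity of $K$ applied to $F/K(t)$ and $q$ yields a $K$-place $\psi$ of $F$ with $a := \psi(t) \in K$, $q(a) \neq 0$, and $\deg \psi = [F:K(t)]$. Setting $b = \psi(y)$, the element $b$ is a root of $f(a,X)$, which has degree $n$ and is separable by the factors of $q$. Applying Lemma~\ref{lem:lineardisjointness} over $E = L(t)$ to $F_1 = L(t,y)$ and $F_2 = L^*(t)$ gives residue fields $L(b)$ and $L^*$ linearly disjoint over $L$, with $[L(b):L] = n$; in particular $L(b) \cap L^* = L$. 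By construction of $L^*$, the intermediate field $L(b) \cap M$ has degree $\leq n$ over $L$, hence lies in $L^*$, so $L(b) \cap M \subseteq L(b) \cap L^* = L$. This forces $[M(b):M] = [L(b):L] = n$, meaning $f(a,X)$ is irreducible over $M$; together with $p(a) \neq 0$ and $a \in K \subseteq M$, the required specialization is produced.

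\textbf{Main obstacle.} The main conceptual point to verify carefully is the interplay between smallness and linear disjointness: smallness is what makes $L^*$ a finite Galois extension of $K$ that absorbs every intermediate field of $M/L$ up to degree $n$, and linear disjointness of the residue fields in Lemma~\ref{lem:lineardisjointness} is what forces $L(b) \cap L^* = L$. The routine bookkeeping---translating coefficients in $L[T]$ into a polynomial over $K$ via norm, identifying the residue of $L(t)$ under $\psi$ with $L$ so that Lemma~\ref{lem:lineardisjointness} applies with base $L$, and checking separability of $F/K(t)$---is straightforward once the setup is in place.
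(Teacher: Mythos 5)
Your proof is correct, but it takes a genuinely different route from the paper's. Both arguments invoke Theorem~\ref{thm:main} to reduce to an absolutely irreducible $f$ and both use smallness only to extract a finite set of potential obstructions, but they exploit that finiteness differently. The paper re-runs the $r$-copies machinery of the main theorem: it passes to a finite Hilbertian subfield $K'$ with $f\in K'[T,X]$, takes $r$ algebraically independent copies of $F=K'(t)[X]/(f(t,X))$ with $r$ exceeding the number of subextensions of $M/K'$ of degree $\leq n$, specializes them simultaneously to obtain linearly disjoint residue fields $N_1,\ldots,N_r$, and concludes by pigeonhole that some $N_i$ satisfies $N_i\cap M=K'$. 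You instead keep a single copy of the function field but enlarge the constant field: you form the compositum $L^*$ of all intermediate fields of $M/L$ of degree $\leq n$ over $L$ (finite by smallness), apply Hilbertianity once to $L^*(t,y)/K(t)$, and use Lemma~\ref{lem:lineardisjointness} to force $L(b)\cap L^*=L$ and hence $L(b)\cap M=L$, whence $[M(b):M]=n$ since $M/L$ is Galois. Your version avoids both the reduction of transcendence degree (Lemma~\ref{lem:fromrto1}) and the pigeonhole step, at the cost of working over the possibly large auxiliary field $L^*$; it is arguably the slicker argument here. One point you flagged that genuinely needs tightening: the place $\psi$ obtained from Hilbertianity of $K$ is only a $K$-place, so $\psi|_L$ is a priori a nontrivial $\sigma\in\gal(L/K)$ (it maps $L$ onto itself by normality), and $b$ is then a root of $f^\sigma(a,X)$ rather than of $f(a,X)$. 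This is repaired either by extending $\sigma$ to an element of $\gal(M/K)$ and transporting irreducibility back to $f(a,X)$, or more simply by applying Hilbertianity of $L^*$ itself (a finite separable extension of $K$, hence Hilbertian), which parallels the paper's passage to $K'$ and yields an $L^*$-place with no conjugation issue.
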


\begin{proof}
Let $f(T,X)\in M[T,X]$ be an absolutely irreducible polynomial that
is separable and of degree $n$ in $X$. Let $K'$ be a finite
subextension of $M/K$ such that $f(T,X)\in K'[T,X]$. Then $K'$ is
Hilbertian \cite[Corollary~12.2.3]{FriedJarden2005}. Evidently
$M/K'$ is also small. Let $r> \#$ of subextensions of $M/K'$ of
degree $\leq n$.

Let $F = K'(t)[X]/(f(t,X))$. Then $F$ is regular over $K'$. Now we
proceed as in the proof of Theorem~\ref{thm:main}: Take $r$ copies
$F_1/K'(t_1),\ldots, F_r/K'(t_r)$ of $F/K(t)$. From the
Hilbertianity of $K'$ we get a specialization $(t_1,\ldots, t_r)
\mapsto (a_1, \ldots, a_r) \in K'^r$ such that the residue fields
$N_i$ of $F_i$ are linearly disjoint, and $[N_i:K] = [F_i : K(t_i)]
= n$.

Now, if there exists $1\leq i\leq r$ such that $N_i\cap M = K'$,
then $[N_i M : M] = [N_i : K'] = n$. But $N_i M$ is the residue
field of $F_iM$; so we are done.

Assume that $L_i := N_i\cap M \neq K'$ for all $1\leq i\leq r$.
Then, since $N_i\cap N_j = K'$ we have $L_i\cap L_j = K'$. In
particular, $L_1, \ldots, L_r$ are distinct subextensions of $M/K'$
of degrees $\leq n$. This contradicts the choice of $r$.
\end{proof}

\begin{remark}
Actually a stronger statement than we proved is true, namely $M$ is
Hilbertian over $K$. Indeed, the fact that $K'$ is not only
Hilbertian, but also Hilbertian over $K$
\cite[Corollary~12.2.3]{FriedJarden2005} implies that we can choose
$a_i$ to be in $K$.

This stronger assertion is also proved in
\cite[Proposition~16.11.1]{FriedJarden2005}.
\end{remark}

\bibliographystyle{amsplain}


\end{document}